\renewcommand*\subjclass[2][2000]{%
  \def\@subjclass{#2}%
  \@ifundefined{subjclassname@#1}{%
    \ClassWarning{\@classname}{Unknown edition (#1) of Mathematics
      Subject Classification; using '1991'.}%
  }{%
    \@xp\let\@xp\subjclassname\csname subjclassname@#1\endcsname
  }%
}
\newtheorem{theorem}{Theorem}[section]
\newtheorem{lemma}[theorem]{Lemma}
\newtheorem*{lemma*}{Lemma}
\theoremstyle{definition}
\theoremstyle{remark}
\newtheorem{remark}[theorem]{Remark}
\numberwithin{equation}{section}
\def\XXint#1#2#3{{\setbox0=\hbox{$#1{#2#3}{\int}$}
\vcenter{\hbox{$#2#3$}}\kern-.5\wd0}}
\def\le{\leqslant}
\def\ge{\geqslant}
\begin{document}

\title{Optimal estimates for
harmonic functions in the unit ball} \subjclass{Primary 31A05;
Secondary 42B30 }

%\date{11 October, 2005}

\keywords{Harmonic functions, Bloch functions, Hardy spaces}
\author{David Kalaj}
\address{University of Montenegro, Faculty of Natural Sciences and
Mathematics, Cetinjski put b.b. 81000 Podgorica, Montenegro}
\email{davidk@ac.me}

\author{Marijan Markovi\'c}
\address{University of Montenegro, Faculty of Natural Sciences and
Mathematics, Cetinjski put b.b. 81000 Podgorica, Montenegro}
\email{marijanmmarkovic@gmail.com}

\begin{abstract}
We find the sharp constants $C_p$ and the sharp functions
$C_p=C_p(x)$ in the inequality
$$|u(x)|\leq \frac{C_p}{(1-|x|^2)^{(n-1)/p}}\|u\|_{h^p(B^n)}, u\in h^p(B^n), x\in B^n,$$
in terms of Gauss hypergeometric and Euler functions. This extends
and improves some results of Axler, Bourdon and Ramey (\cite{ABR}),
where they obtained similar results which are sharp only in the
cases $p=2$ and $p=1$.
\end{abstract}  \maketitle

%\tableofcontents

\section{Introduction and  statement of the results}

Let $n\ge 2$ and let $h^p(B^n),$ $1\le p\le \infty$, be the harmonic
Hardy spaces on the unit $n$-dimensional ball $B^n$ in the Euclidean
space $\mathbf{R}^n$, the space of all harmonic functions $u$
satisfying growth condition
$$\|u\|_p^p:=\|u\|^p_{h^p(B^n)}=\sup_{0<r<1}\int_S|u(r\zeta)|^pd\sigma(\zeta)<\infty$$
where $S=S^{n-1}$ is the unit sphere and $\sigma$ is the unique
normalized rotation invariant Borel measure on $S$. It is well known
that a harmonic function $u\in h^p(B^n)$ posses radial (angular)
limit $u(\zeta)$ in almost all points on sphere $\zeta\in S^{n-1}$
and that for $p>1$ it is possible to express in the form
\begin{equation}\label{poi}u(x)=\int_SP(x,\zeta)u(\zeta)d\sigma(\zeta),\end{equation}
where
$$P(x,\zeta)=\frac{1-|x|^2}{|x-\zeta|^n}, \zeta\in S $$
is Poisson kernel.

The maximum principle implies that, if $u\in h^\infty(B^n)$, then
$|u(x)|\le \|u\|_{\infty}$. On the other hand, it follows from the
Poisson representation formula \eqref{poi} that, if $u \in
h^1(B^n)$, then $$|u(x)|\le \sup_{\zeta \in S}P(x,\zeta) \|u\|_1.$$
Then $$\sup_{\zeta \in S}P(x,\zeta)=
\frac{(1+|x|)^n}{(1-|x|^2)^{n-1}}.$$ In this work we find a
representation for the sharp constants  $C_p$ and the sharp
functions $C_p=C_p(x)$ in the inequality $$|u(x)|\le
\frac{C_p}{(1-|x|^2)^{(n-1)/p}}\|u\|_p$$ where $x$ is an arbitrary
point in the unit ball $B^n$.

It is well known  that $C_p(x)$ is a bounded function in $B^n$ for
$1\le p\le \infty$, and the power $(n-1)/p$ is optimal. See
\cite[Proposition~6.16]{ABR} for the case $n\ge 2$ and $1\le p\le
\infty$ and \cite[Lemma~5.1.1]{lib} for the case of analytic
functions ($n=2$) and $0<p<\infty$. In the case when $h^p(B^n)$ is
Hilbert space, that is for $p=q=2$ we have next sharp point estimate
\begin{equation}\label{ola}|u(x)|\leq \sqrt \frac{1+|x|^2}{(1-|x|^2)^{n-1}}
\|u\|_{h^2(B^n)}.\end{equation}

The previous inequality is obtained in \cite[Proposition~6.23]{ABR}
using the fact that $h^2(B^n)$ is Hilbert space and Riesz
representation theorem for functionals by use of the scalar product
and Cauchy-Schwartz inequality in the general setting
$|\left<x,y\right>|\leq \|x\| \|y\|$.

Let $q$ be as usual conjugate with $p$ that is $1/p+1/q=1$. In this
paper we generalize \eqref{ola} by proving the following two
theorems.
\begin{theorem}\label{prima} Let $1<p\le \infty$. For all $u\in h^p(B^n)$
and $x\in B^n$ we have the following sharp inequality
\begin{equation}\label{ux}|u(x)|\leq
\frac{C_p(x)}{(1-|x|^2)^{(n-1)/p}}\|u\|_{h^p(B^n)}\end{equation}
where
$$C_p(x)=(F(-1 + n - \frac{n q}{2}, \frac 12 (n - n q),
  \frac n2, |x|^2))^{1/q},$$ and $F$ is the
Gauss Hypergeometric functions.
\end{theorem}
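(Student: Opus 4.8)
The plan is to combine the Poisson representation \eqref{poi} with Hölder's inequality, reduce the resulting integral of a power of the Poisson kernel to a closed form, and then reshape the answer with Euler's transformation of $F$. Put $q=p/(p-1)$. The endpoint $p=\infty$ (so $q=1$) is just the maximum principle, and there the second hypergeometric parameter $\tfrac12(n-nq)$ vanishes, so $C_\infty(x)\equiv1$ and the estimate reads $|u(x)|\le\|u\|_\infty$. For $1<p<\infty$ every $u\in h^p(B^n)$ is the Poisson integral of its boundary function, which lies in $L^p(S,d\sigma)$ with $\|u\|_p=\|u\|_{L^p(S)}$; Hölder's inequality applied to \eqref{poi} gives
\begin{equation}\label{Eq-holder}
|u(x)|\le\int_S P(x,\zeta)|u(\zeta)|\,d\sigma(\zeta)\le\Big(\int_S P(x,\zeta)^q\,d\sigma(\zeta)\Big)^{1/q}\|u\|_p .
\end{equation}
Thus the theorem reduces to showing $\big(\int_S P(x,\zeta)^q\,d\sigma(\zeta)\big)^{1/q}=C_p(x)(1-|x|^2)^{-(n-1)/p}$, and since $P(x,\zeta)^q=(1-|x|^2)^q|x-\zeta|^{-nq}$ this amounts to an explicit evaluation of $\int_S|x-\zeta|^{-nq}\,d\sigma(\zeta)$.

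The key computation is the identity
\begin{equation}\label{Eq-core}
\int_S\frac{d\sigma(\zeta)}{|x-\zeta|^{2s}}=F\Big(s,\,s-\tfrac n2+1,\,\tfrac n2,\,|x|^2\Big),\qquad s>0,\ x\in B^n .
\end{equation}
To prove it, put $x=|x|e_1$, write $|x-\zeta|^2=1-2|x|t+|x|^2$ with $t=\langle e_1,\zeta\rangle$, and use the rotation-invariant slicing formula to reduce the left side to a constant times $\int_{-1}^1(1-2|x|t+|x|^2)^{-s}(1-t^2)^{(n-3)/2}\,dt$. Expanding by the Gegenbauer generating function $(1-2|x|t+|x|^2)^{-s}=\sum_{m\ge0}C_m^{(s)}(t)|x|^m$ and integrating termwise, the odd-degree terms vanish by parity, and the connection formula between Gegenbauer polynomials of indices $s$ and $(n-2)/2$, combined with orthogonality of the $C_k^{((n-2)/2)}$ against the weight $(1-t^2)^{(n-3)/2}$, leaves exactly $\sum_{j\ge0}\frac{(s)_j\,(s-\tfrac n2+1)_j}{(\tfrac n2)_j\,j!}|x|^{2j}$, while the overall Gamma-function constant collapses to $1$ by the Legendre duplication formula. (Equivalently, the substitution $t=1-2\tau$ turns the one-dimensional integral into Euler's integral representation of $F$, after which a quadratic transformation brings it to the form \eqref{Eq-core}.) Taking $2s=nq$ and multiplying by $(1-|x|^2)^q$ yields
\[
\int_S P(x,\zeta)^q\,d\sigma(\zeta)=(1-|x|^2)^q\,F\Big(\tfrac{nq}{2},\,\tfrac{nq}{2}-\tfrac n2+1,\,\tfrac n2,\,|x|^2\Big).
\]

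Now apply Euler's transformation $F(a,b,c,z)=(1-z)^{c-a-b}F(c-a,c-b,c,z)$ with $a=\tfrac{nq}{2}$, $b=\tfrac{nq-n+2}{2}$, $c=\tfrac n2$; a direct computation gives $c-a=\tfrac12(n-nq)$, $c-b=n-1-\tfrac{nq}{2}$, $c-a-b=n-nq-1$, hence
\[
\int_S P(x,\zeta)^q\,d\sigma(\zeta)=(1-|x|^2)^{\,q-(nq-n+1)}\,F\Big(n-1-\tfrac{nq}{2},\,\tfrac12(n-nq),\,\tfrac n2,\,|x|^2\Big).
\]
Since $1/p+1/q=1$ we have $q-(nq-n+1)=-(n-1)q/p$, so the right side is $(1-|x|^2)^{-(n-1)q/p}C_p(x)^q$; taking $q$-th roots and inserting into \eqref{Eq-holder} gives \eqref{ux}. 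For sharpness, note that equality in \eqref{Eq-holder} is attained when $|u(\zeta)|^p$ is proportional to $P(x,\zeta)^q$: since $P(x,\cdot)>0$ one takes the nonnegative boundary function $f(\zeta)=P(x,\zeta)^{q/p}$ (and $f\equiv1$ when $p=\infty$), which lies in $L^p(S,d\sigma)$ because $\int_S|f|^p\,d\sigma=\int_S P(x,\zeta)^q\,d\sigma<\infty$. Its Poisson integral $u_x$ belongs to $h^p(B^n)$ with $\|u_x\|_p=\|f\|_{L^p(S)}=\big(\int_S P(x,\zeta)^q\,d\sigma\big)^{1/p}$, while $1+q/p=q$ gives $u_x(x)=\int_S P(x,\zeta)^{1+q/p}\,d\sigma=\int_S P(x,\zeta)^q\,d\sigma$, so $u_x(x)/\|u_x\|_p=\big(\int_S P(x,\zeta)^q\,d\sigma\big)^{1/q}=C_p(x)(1-|x|^2)^{-(n-1)/p}$, and the constant cannot be lowered.

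The routine parts are the slicing formula and the Gamma-function bookkeeping. The step that carries the real content, and the main obstacle, is the closed-form evaluation \eqref{Eq-core} of the spherical integral of a power of $|x-\zeta|$ — in particular pinning down the correct chain of classical hypergeometric identities (a connection or quadratic transformation of $F$, followed by Euler's transformation) that converts the natural output of the computation into the compact expression stated for $C_p(x)$.
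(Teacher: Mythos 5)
Your proof is correct, and it takes a somewhat different route from the paper's. The paper first performs the M\"obius change of variables $\zeta=-T_{r_x}(\eta)$ inside $\int_S P^q(x,\zeta)\,d\sigma$, which extracts the factor $(1-|x|^2)^{q-nq+n-1}$ geometrically and leaves the integral of a \emph{positive} power $|\eta-r_x|^{nq-2n+2}$; that integral is then reduced to a single integral in spherical coordinates and evaluated via Euler's integral representation plus Kummer's quadratic transformation \eqref{kum}, landing directly on $F(-1+n-\frac{nq}{2},\frac12(n-nq),\frac n2,|x|^2)$. You instead evaluate $\int_S|x-\zeta|^{-nq}\,d\sigma$ head-on (Gegenbauer connection plus orthogonality, or equivalently the Euler integral followed by a quadratic transformation), obtaining $F(\frac{nq}{2},\frac{nq-n+2}{2},\frac n2,|x|^2)$, and then apply Euler's transformation $F(a,b,c,z)=(1-z)^{c-a-b}F(c-a,c-b,c,z)$ to pull out the power of $1-|x|^2$ and match the stated parameters; your parameter bookkeeping ($c-a=\frac12(n-nq)$, $c-b=n-1-\frac{nq}{2}$, exponent $q+n-nq-1=-(n-1)q/p$) checks out. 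The two arguments are essentially dual: the M\"obius substitution in the paper plays exactly the role that Euler's transformation plays for you. Two things your version buys: it avoids the change-of-variables Jacobian computation on the sphere, and — more importantly — you make the sharpness explicit by exhibiting the extremizer with boundary data $P(x,\cdot)^{q/p}$ and verifying $u_x(x)/\|u_x\|_p=\bigl(\int_S P^q\,d\sigma\bigr)^{1/q}$, a step the paper leaves implicit (it only asserts sharpness of the H\"older step). The only place you argue in outline rather than in full is the closed-form evaluation of $\int_S|x-\zeta|^{-2s}\,d\sigma$; both routes you sketch for it are standard and sound (with the usual caveat that the Gegenbauer connection formula degenerates at $n=2$, where the Euler-integral route still applies since $\frac n2>\frac{n-1}2>0$).
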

\begin{theorem}\label{seconda}  Let $1< p\le \infty$. For all $u\in h^p(B^n)$ and $x\in
B^n$ we have the sharp inequality
$$|u(x)|\leq \frac{C_p}{(1-|x|^2)^{(n-1)/p}}||u||_{h^p(B^n)}$$ where
$$C_p=\left\{
  \begin{array}{ll}
    1, & \hbox{if $q\le 2-\frac{2}{n}$;} \\
    \left(\frac{2^{nq-n} \Gamma(\frac n2) \Gamma\left(\frac{1+nq-n}{2}\right)}{\sqrt{\pi}
\Gamma(\frac{nq}{2})}\right)^{1/q}, & \hbox{if $q>2-\frac{2}{n}$.}
  \end{array}
\right.$$
\end{theorem}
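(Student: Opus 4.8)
The strategy is to derive Theorem~\ref{seconda} from Theorem~\ref{prima} by computing $\sup_{x\in B^n}C_p(x)$. Put $t=|x|^2\in[0,1)$ and abbreviate
$$a=n-1-\frac{nq}{2},\qquad b=\frac n2(1-q),\qquad c=\frac n2,\qquad \varphi(t)=F(a,b,c,t).$$
By Theorem~\ref{prima} the smallest constant independent of $x$ is $C_p=\bigl(\sup_{0\le t<1}\varphi(t)\bigr)^{1/q}$, so the task reduces to analysing $\varphi$ on $[0,1)$. Since $q\ge1$ one always has $b\le0$, while $a>0$, $a=0$ or $a<0$ according as $q<2-\frac2n$, $q=2-\frac2n$ or $q>2-\frac2n$; hence $ab$ keeps a fixed sign throughout each of the two regimes in the statement. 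Also $c-a-b=n(q-1)+1>0$, so $\varphi$ extends continuously to $t=1$.

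The heart of the argument is a monotonicity statement. From the differentiation rule $\varphi'(t)=\frac{ab}{c}F(a+1,b+1,c+1,t)$ together with Euler's transformation applied to the contiguous function,
$$F(a+1,b+1,c+1,t)=(1-t)^{n(q-1)}\,F\!\left(1+\tfrac n2(q-1),\ \tfrac{nq}{2},\ \tfrac n2+1,\ t\right),$$
and observing that the three parameters on the right are all strictly positive, the hypergeometric series there has nonnegative coefficients and sums to something $\ge1$; therefore $F(a+1,b+1,c+1,t)>0$ for all $t\in[0,1)$, and $\varphi'(t)$ has the constant sign of $ab$. If $q\le2-\frac2n$, then $ab\le0$ (with $\varphi\equiv1$ in the degenerate cases $a=0$ and $q=1$), so $\varphi$ is nonincreasing and $\sup_{[0,1)}\varphi=\varphi(0)=1$, giving $C_p=1$. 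If $q>2-\frac2n$, then $a<0$ and $b<0$, so $ab>0$, $\varphi$ is strictly increasing, and $\sup_{[0,1)}\varphi=\lim_{t\to1^-}\varphi(t)$.

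To finish the second case I would evaluate the endpoint limit by Gauss's summation theorem (legitimate since $c-a-b>0$):
$$\sup_{[0,1)}\varphi=\frac{\Gamma(c)\,\Gamma(c-a-b)}{\Gamma(c-a)\,\Gamma(c-b)}
=\frac{\Gamma(\tfrac n2)\,\Gamma(nq-n+1)}{\Gamma\!\bigl(1+\tfrac n2(q-1)\bigr)\,\Gamma\!\bigl(\tfrac{nq}{2}\bigr)},$$
and then rewrite the quotient $\Gamma(nq-n+1)/\Gamma(1+\tfrac n2(q-1))$ via the Legendre duplication formula $\Gamma(z)\Gamma(z+\tfrac12)=2^{1-2z}\sqrt{\pi}\,\Gamma(2z)$ with $2z=nq-n+1$; this turns it into $\tfrac{2^{nq-n}}{\sqrt{\pi}}\Gamma\!\bigl(\tfrac{1+nq-n}{2}\bigr)$, and taking the $1/q$-th power yields exactly the second branch of the asserted value of $C_p$. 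Sharpness is immediate: since the pointwise inequality of Theorem~\ref{prima} is sharp for each $x$, no $x$-independent constant smaller than $\sup_{x}C_p(x)$ can work — in the regime $q>2-\frac2n$ this supremum is not attained at any point of $B^n$, but that does not affect the value of the best constant. I expect the only genuinely delicate point to be the positivity of $F(a+1,b+1,c+1,\cdot)$ on $[0,1)$, for which the Euler transformation is precisely what is needed; the rest is standard manipulation of contiguous hypergeometric functions and a gamma identity.
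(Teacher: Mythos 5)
Your proof is correct, and it follows the same overall skeleton as the paper (reduce to $\sup_{0\le t<1}F(a,b,c,t)$ with $a=n-1-\tfrac{nq}{2}$, $b=\tfrac n2(1-q)$, $c=\tfrac n2$; show monotonicity via the sign of $ab$ and the positivity of the contiguous function $F(a+1,b+1,c+1,\cdot)$; evaluate at the relevant endpoint), but the two technical ingredients are handled differently, and your versions are arguably cleaner. For the positivity of $F(a+1,b+1,c+1,t)$ the paper must split into two regimes: for $1<q<2$ it uses the Euler integral representation \eqref{for}, which requires the parameter $n-\tfrac{nq}{2}$ to be positive and hence fails for $q\ge 2$, forcing a separate elementary argument (Lemma~\ref{lar}) in which the integral $a_q(r)$ is differentiated under the integral sign and the sign of $\int_0^\pi \sin^{n-2}t\,\cos t\,(1+r^2-2r\cos t)^{nq/2-n}\,dt$ is controlled by a symmetrization. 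Your Euler transformation $F(a+1,b+1,c+1,t)=(1-t)^{n(q-1)}F\bigl(1+\tfrac n2(q-1),\tfrac{nq}{2},\tfrac n2+1,t\bigr)$ produces three strictly positive parameters for every $q>1$, so the positivity is immediate from the series and the case split $q\lessgtr 2$ disappears; this is a genuine simplification and unification. For the endpoint value, the paper computes $a_q(1)$ directly as a Beta-type integral (Lemma~\ref{lepa}, stated without proof), whereas you use Gauss's summation theorem at $t=1$ (legitimate since $c-a-b=n(q-1)+1>0$) followed by the Legendre duplication formula; both yield the same Gamma quotient. Your remark on sharpness (the supremum need not be attained for $q>2-\tfrac 2n$ but still gives the best constant) is correct and is implicit in the paper.
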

\begin{remark} The cases $p=\infty$ and $p=1$ are already
considered in the introduction of this paper and are well-known. For
the case $p=\infty$ i.e. $q=1$ we have $C_p(x)=1$. For the case
$p=1$ we have $C_1(x)=(1+|x|)^n$. We will assume in the sequel that
$1<q<\infty$.  For $p=q=2$,
$$F(-1 + n - \frac{n q}{2}, \frac 12 (n - n q), \frac n2,
|x|^2)=1+|x|^2,$$ thus \eqref{ux} coincides with \eqref{ola}.
\end{remark}
\begin{remark}
If instead of $|u(x)|$ in Theorems~\ref{prima} and \ref{seconda}, we
put the norm of its gradient $|\nabla u(x)|$, then, instead of
$(n-1)/p$ we have the power $1+(n-1)/p$. See \cite{kha}, \cite{km},
\cite{km1}, \cite{kavu} and \cite{km2} for related results.
\end{remark}
\section{Representations for $C_p(x)$ as single integrals}
If $u$ is harmonic, and $T$ is an orthogonal transformation, then
$u\circ T$ is a harmonic function. Using this fact and $\|u\circ
T\|_{h^p(B^n)}=\|u\|_{h^p(B^n)}$ it is easy to see that
$C_p(x)=C_p(r_x)$ where $r_x=(|x|,0,\dots,0)$ is the vector on the
$e_1$ axis of the same norm as $x$.

In the sequel we will use M\"obius transform of the multidimensional
ball. Let us recall some basic facts from \cite{al}. In general a
M\"obius transform $T_x:B^n\to B^n$ has form
$$T_x(y)=\frac{(1-|x|^2)(y-x)-|y-x|^2x}{[y,x]^2}, \ \  y\in B^n$$
where $[y,x]=|y||x-y^*|$, $y^*=y/|y|^2$. In special case if $x=r_x$
we have
$$T_{r_x}(y)=(1-|x|^2)\frac{y-r_x}{|y-r_x|^2}-r_x.$$
Jacobi determinant of the $T_{r_x}:S\to S$ in the point $\eta \in S
$ is
$$J_{T_{r_x}}(\eta)=\left(\frac{1-|x|^2}{|\eta-r_x|^2}\right)^{n-1}.$$
By applying Holder inequality in the relation
$$u(x)=\int_SP(x,\zeta)u(\zeta)d\sigma(\zeta),$$ we have
$$|u(x)|\leq \left(\int_SP^q(x,\zeta)d\sigma(\zeta)\right)^{1/q} \|u\|_p.$$
Let $$I_q=\int_S P^q(x,\zeta)d\sigma(\zeta).$$ In the integral we
make change of variables $\zeta=-T_{r_x}(\eta),$ where
$$T_{r_x}(\eta)=(1-|x|^2)\frac{\eta-r_x}{|\eta-r_x|^2}-r_x$$ is M\"obius
transform of the unit ball $B^n$. Then
$$|r_x-\zeta|=\frac{1-|x|^2}{|\eta-r_x|}$$ and
$$d\sigma(\zeta)=\left(\frac{1-|x|^2}{|\eta-r_x|^2}\right)^{n-1}d\sigma(\eta).$$
So
\[\begin{split}
I_q&=\int_{S}\frac{(1-|x|^2)^q}{\frac{(1-|x|^2)^{nq}}{|r_x-\eta|^{nq}}}\left(\frac{1-|x|^2}{|\eta-r_x|^2}\right)^{n-1}d\sigma(\eta)
\\&=\int_{S}(1-|x|^2)^{q-nq+n-1}|\eta-r_x|^{nq-2n+2}d\sigma(\eta).
\end{split}\]
Further
\[\begin{split}
I_q^{1/q}=
\frac{1}{(1-|x|^2)^{(n-1)/p}}\left(\int_{S}|\eta-r_x|^{nq-2n+2}d\sigma(\eta)\right)^{1/q}
\end{split}\]
and
$$|u(x)|\leq \frac{1}{(1-|x|^2)^{(n-1)/p}}\left(\int_{S}|\eta-r_x|^{nq-2n+2}d\sigma(\eta)\right)^{1/q}\|u\|_{h^p(B^n)},$$
or
$$|u(x)|\leq \frac{C_p(x)}{(1-|x|^2)^{(n-1)/p}}\|u\|_{h^p(B^n)},$$
where
$$C_p(x)=\left(\int_{S}|\eta-r_x|^{nq-2n+2}d\sigma(\eta)\right)^{1/q}.$$
The sharp constant $C_p$ is
$$C_p=\sup_{x\in B^n}\left(\int_{S}|\eta-r_x|^{nq-2n+2}d\sigma(\eta)\right)^{1/q}.$$
For $n=2$ we have
\[\begin{split}
C_p^q(x)=\int_{S^1}|\eta-r_x|^{2q-2}d\sigma(\eta)=\frac{1}{2\pi}\int_0^{2\pi}(1+|x|^2-2|x|\cos\theta)^{q-1}d\theta
&\\=\frac{1}{\pi}\int_0^{\pi}(1+|x|^2-2|x|\cos\theta)^{q-1}d\theta.
\end{split}\]
Let $n>2$ and
$$K=\{(\theta_1,\dots,\theta_{n-2},\varphi):0\leq\theta_1,\dots,\theta_{n-2}\leq\pi,0\leq\varphi\leq 2\pi\}.$$
Using spherical coordinates
$(\eta_1,\dots,\eta_{n-1},\eta_{n})=(\theta_1,\dots,\theta_{n-2},\varphi)$
we get
\[\begin{split}C_p^q(x)&=\int_{S}|\eta-r_x|^{nq-2n+2}d\sigma(\eta)\\&=\frac{1}{\omega_{n-1}}\int_K(1+|x|^2-2|x|\cos\theta_1)^{nq/2-n+1}
\sin^{n-2}\theta_1\dots\sin\theta_{n-2}d\theta_1\dots d\varphi\\&
=\frac{2\pi}{\omega_{n-1}}I_n\int_0^{\pi}\sin^{n-2}\theta_1(1+|x|^2-2|x|\cos\theta_1)^{nq/2-n+1}d\theta_1,
\end{split}\]
where
$$I_n=\int_0^{\pi}\sin^{n-3}\theta_2d\theta_2\dots\int_0^{\pi}\sin\theta_{n-2}d\theta_{n-2}$$
and $\omega_{n-1}$ is volume of $n-1$-dimensional unit sphere. Since
$$\int_0^{\pi}\sin^{n-2}\theta
d\theta=\frac{\sqrt\pi\Gamma((n-1)/2)}{\Gamma(n/2)},$$ and $$I_n =
\frac{\omega_{n-1}}{2\pi \int_0^{\pi}\sin^{n-2}\theta d\theta}$$ we
have
\begin{equation}\label{cp}C_p^q(x)=\frac{\Gamma(n/2)}{\sqrt\pi\Gamma((n-1)/2)}\int_0^{\pi}\sin^{n-2}\theta(1+|x|^2-2|x|\cos\theta)^{nq/2-n+1}d\theta.\end{equation}
Note that (2.1) is also true for $n=2$ since $\Gamma(1/2)=\sqrt
\pi$.
\section{Representations for $C_p(x)$ as a Gauss hypergeometric function and the proof of Theorem~\ref{prima}}
We recall the classical definition of the Gauss hypergeometric
function: $${F}(a, b, c, z) = 1 +
\sum_{n=1}^\infty\frac{(a)_n(b)_n}{ (c)_n n!} z^n,$$ where
$(d)_n=d(d+1)\cdots (d+n-1)$ is the Pochhammer symbol. The series
converges at least for complex $z \in \mathbf{U}:=\{z:|z|<1\}\subset
\mathbf C$ and for $z\in \mathbf{T}:=\{z:|z|=1\}$, if $c>a+b$. Here
$\mathbf C$ is the complex plane. For $\Re (c)>\Re (b)>0$ we have
the following well-known formula
\begin{equation}\label{for}F(a,b,c,z)=\frac{\Gamma(c)}{\Gamma(b)\Gamma(c-b)}\int_0^1\frac{t^{b-1}(1-t)^{c-b-1}}{(1-tz)^a}dt.\end{equation}
It is easy to check the following formula
\begin{equation}\label{deri} \frac{d}{dz}{F}(a, b, c, z)=\frac{a
b \,{F}(1 + a, 1 + b, 1 + c, z)}{c}.\end{equation} We will use
Kummer's Quadratic Transformation of a hypergeometric
function,\begin{equation}\label{kum}
F(a,b,2b,\frac{4z}{(1+z)^2})=(1+z)^{2a}{F}(a,a+\frac
12-b,b+\frac{1}{2},z^2).\end{equation}
\begin{lemma}\label{pole}
For $n\ge 2$, $0\le r\le 1$ and $q\ge 1$ we have
\begin{equation}\label{mir}\begin{split}a_q(r):&=\int_0^\pi \sin^{n-2} t(1+r^2-2 r \cos
t)^{nq/2-n+1} dt \\&= \frac{\sqrt\pi
  \Gamma(\frac{ n-1}{2}) F(-1 + n - \frac{n q}{2}, \frac 12 (n - n q),
  \frac n2, r^2)}{ \Gamma(\frac n2)},\end{split}\end{equation} where $F$ is the
Gauss hypergeometric function.
  \end{lemma}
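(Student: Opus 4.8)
The plan is to reduce $a_q(r)$ to Euler's integral representation \eqref{for} and then apply Kummer's quadratic transformation \eqref{kum}; it is convenient to abbreviate $\alpha:=nq/2-n+1$, the exponent appearing in \eqref{mir}. First I would substitute $u=\cos t$, which turns the trigonometric integral into $a_q(r)=\int_{-1}^{1}(1-u^{2})^{(n-3)/2}(1+r^{2}-2ru)^{\alpha}\,du$, and then perform the affine change $u=1-2t$, using $1-u^{2}=4t(1-t)$ and $1+r^{2}-2ru=(1-r)^{2}+4rt$, which gives $a_q(r)=2\cdot 4^{(n-3)/2}\int_{0}^{1}t^{(n-3)/2}(1-t)^{(n-3)/2}\big((1-r)^{2}+4rt\big)^{\alpha}\,dt$. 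Replacing $t$ by $1-t$ (the Beta weight $t^{(n-3)/2}(1-t)^{(n-3)/2}$ is symmetric) rewrites the last factor as $(1+r)^{2}-4rt=(1+r)^{2}\big(1-\tfrac{4r}{(1+r)^{2}}t\big)$; since $0\le \tfrac{4r}{(1+r)^{2}}\le 1$, for $r<1$ the argument stays strictly inside the unit disc, so no analytic-continuation subtlety arises.

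Next I would factor out $(1+r)^{2\alpha}$ and recognize the remaining integral as \eqref{for} with $b=\tfrac{n-1}{2}$ (so that $b-1=\tfrac{n-3}{2}$), $c=n-1$ (so that $c-b-1=\tfrac{n-3}{2}$) and $a=-\alpha=n-1-\tfrac{nq}{2}$, the hypotheses $\Re c>\Re b>0$ holding for every $n\ge2$. This produces
\[
a_q(r)=2\cdot 4^{(n-3)/2}(1+r)^{2\alpha}\,\frac{\Gamma(\tfrac{n-1}{2})^{2}}{\Gamma(n-1)}\,F\Big(n-1-\tfrac{nq}{2},\ \tfrac{n-1}{2},\ n-1,\ \tfrac{4r}{(1+r)^{2}}\Big).
\]
Now I would apply \eqref{kum} with $a=n-1-\tfrac{nq}{2}$, $b=\tfrac{n-1}{2}$ (so $2b=n-1$) and $z=r$, identifying $\tfrac{4r}{(1+r)^{2}}$ with $\tfrac{4z}{(1+z)^{2}}$; this gives $F\big(a,\tfrac{n-1}{2},n-1,\tfrac{4r}{(1+r)^{2}}\big)=(1+r)^{2a}F\big(a,\,a+\tfrac12-\tfrac{n-1}{2},\,\tfrac n2,\,r^{2}\big)$, and a one-line computation shows $a+\tfrac12-\tfrac{n-1}{2}=\tfrac12(n-nq)$, which is exactly the middle parameter in \eqref{mir}.

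The decisive point is that $\alpha+a=(\tfrac{nq}{2}-n+1)+(n-1-\tfrac{nq}{2})=0$, so the factor $(1+r)^{2\alpha+2a}$ collapses to $1$ and no power of $1+r$ survives. It then remains only to evaluate the constant $2\cdot 4^{(n-3)/2}\Gamma(\tfrac{n-1}{2})^{2}/\Gamma(n-1)$; inserting the Legendre duplication formula $\Gamma(n-1)=\tfrac{2^{\,n-2}}{\sqrt\pi}\,\Gamma(\tfrac{n-1}{2})\Gamma(\tfrac n2)$ reduces it to $\sqrt\pi\,\Gamma(\tfrac{n-1}{2})/\Gamma(\tfrac n2)$, which is precisely the constant in \eqref{mir}. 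The identity is thereby established for $0\le r<1$; the endpoint $r=1$ follows by continuity of both sides, the hypergeometric series converging there because $c-a-b=1+n(q-1)>0$ for $q\ge1$.

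I expect the only real work to be bookkeeping: lining up the three parameters of $F$ after each substitution, and checking the hypotheses of \eqref{for} and of \eqref{kum}. There is no deep obstacle, since the chain ``Euler representation $\rightarrow$ Kummer quadratic transformation'' is exactly what is needed and the relation $\alpha+a=0$ is forced by the definition of $\alpha$. As an alternative one could factor $(1-r)^{2\alpha}$ instead, arriving at $F\big(\cdot,\cdot,\cdot,-\tfrac{4r}{(1-r)^{2}}\big)$ and using \eqref{kum} with $z=-r$; this route needs the (standard) validity of \eqref{for} for negative argument via analytic continuation, but is otherwise identical.
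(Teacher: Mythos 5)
Your proposal is correct and follows essentially the same route as the paper: the net substitution $\cos t = 2s-1$, factoring out $(1+r)^{2\alpha}$, Euler's integral representation \eqref{for} with $b=\tfrac{n-1}{2}$, $c=n-1$, and then Kummer's quadratic transformation \eqref{kum}, with the powers of $1+r$ cancelling because $a=-\alpha$. The parameter bookkeeping and the duplication-formula evaluation of the constant all check out.
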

\begin{proof}[Proof of Lemma~\ref{pole}]
First of all $$ 1+r^2-2 r \cos t=(1+r)^2(1- z \frac{1+\cos t}{2}),$$
where $$ z=\frac{4r}{(1+r)^2}.$$ By taking the substitution $$ u
=\frac{1+\cos t}{2}$$ we obtain $$du = -\frac 12\sin t dt$$ and
$$\sin t = 2 u^{\frac 12}(1-u)^{\frac 12}$$ and therefore
\[\begin{split}a_q(r)&= 2(1+r)^{nq-2n+2}\int_0^1 \sin^{n-3} t (1- z
u)^{nq/2-n+1}du\\&= 2^{n-2}(1+r)^{nq-2n+2}\int_0^1
\frac{u^{\frac{n-3}{2}}(1-u)^{\frac{n-3}{2}}}{ (1- z
u)^{n-1-nq/2}}du.\end{split}\] By taking
$$a=n-1-nq/2,\ \ \  b=\frac{n-1}{2}\text{ and } c=2b =n-1$$  and by using the formula
\eqref{for}  and \eqref{kum}, we obtain
\[\begin{split}a_q(r)&=2^{n-2}(1+r)^{-2a}
\frac{\Gamma(b)\Gamma(c-b)}{\Gamma(c)}{F}(a,b,c,z)\\&=2^{n-2}(1+r)^{-2a}
\frac{\Gamma(b)\Gamma(c-b)}{\Gamma(c)}{F}(a,b,2b,\frac{4r}{(1+r)^2})\\&=2^{n-2}\frac{\Gamma(b)\Gamma(c-b)}{\Gamma(c)}{F}(a,a+\frac
12-b,b+\frac{1}{2},r^2).\end{split}\]  By using
$$2^{n-2}\frac{\Gamma(b)\Gamma(c-b)}{\Gamma(c)}=2^{n-2}\frac{\Gamma(\frac{n-1}{2})\Gamma(\frac{n-1}{2})}{\Gamma(n-1)}=\sqrt{\pi}
\frac{\Gamma(\frac{n - 1}2)}{\Gamma(\frac{n}{2})}$$ we obtain
finally \eqref{mir}.
\end{proof}
\begin{lemma}\label{lepa}
Under the conditions of Lemma~\ref{pole} we have $$a_q(s)=\left\{
                              \begin{array}{ll}
                               \frac{ \sqrt\pi
  \Gamma(\frac{n-1}{2})}{\Gamma(\frac n2)}, & \hbox{if $s=0$,} \\
                                \frac{2^{nq - n }
  \Gamma(\frac{n-1}{2}) \Gamma(\frac{1-n+nq}{2})}{ \Gamma(\frac{n q}{
  2})}, & \hbox{if $s=1$}.
                              \end{array}
                            \right.$$
\end{lemma}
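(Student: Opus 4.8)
The plan is to evaluate $a_q(s)$ at the two endpoints $s=0$ and $s=1$, using the hypergeometric representation \eqref{mir} established in Lemma~\ref{pole}. For $s=0$ the computation is immediate: since ${F}(a,b,c,0)=1$, formula \eqref{mir} gives
$$a_q(0)=\frac{\sqrt\pi\,\Gamma(\tfrac{n-1}{2})}{\Gamma(\tfrac n2)},$$
which is the first case. One could equally note that the integrand in the definition of $a_q(r)$ reduces to $\sin^{n-2}t$ at $r=0$ and invoke the formula $\int_0^\pi \sin^{n-2}t\,dt = \sqrt\pi\,\Gamma(\tfrac{n-1}{2})/\Gamma(\tfrac n2)$ quoted earlier.

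For $s=1$ the main point is to evaluate the Gauss hypergeometric function at $z=1$. Here I would use the classical Gauss summation formula
$${F}(a,b,c,1)=\frac{\Gamma(c)\,\Gamma(c-a-b)}{\Gamma(c-a)\,\Gamma(c-b)},$$
valid when $\re(c-a-b)>0$. With the parameters coming from \eqref{mir}, namely $a=-1+n-\tfrac{nq}{2}$, $b=\tfrac12(n-nq)$ and $c=\tfrac n2$, one computes $c-a-b = \tfrac n2 - (-1+n-\tfrac{nq}{2}) - \tfrac12(n-nq) = 1 - \tfrac n2 + nq$, and for $q$ in the relevant range this is positive (indeed $q>1$ suffices together with $n\ge 2$; one should check the boundary cases). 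Then $c-a = 1-\tfrac n2 + \tfrac{nq}{2}$ and $c-b = \tfrac n2 - \tfrac12(n-nq) = \tfrac{nq}{2}$, so
$${F}(a,b,\tfrac n2,1)=\frac{\Gamma(\tfrac n2)\,\Gamma(1-\tfrac n2 + nq)}{\Gamma(1-\tfrac n2+\tfrac{nq}{2})\,\Gamma(\tfrac{nq}{2})}.$$

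Substituting this into \eqref{mir} at $r=1$ gives
$$a_q(1)=\frac{\sqrt\pi\,\Gamma(\tfrac{n-1}{2})}{\Gamma(\tfrac n2)}\cdot\frac{\Gamma(\tfrac n2)\,\Gamma(\tfrac{1-n+nq+ (n-1)}{\ }/\!\!/)}{\ldots},$$
and the claimed form $\dfrac{2^{nq-n}\,\Gamma(\tfrac{n-1}{2})\,\Gamma(\tfrac{1-n+nq}{2})}{\Gamma(\tfrac{nq}{2})}$ should emerge after one application of the Legendre duplication formula $\Gamma(2w)=\frac{2^{2w-1}}{\sqrt\pi}\Gamma(w)\Gamma(w+\tfrac12)$ to combine $\Gamma(1-\tfrac n2+nq)$ with $\Gamma(1-\tfrac n2+\tfrac{nq}{2})$, or equivalently to the pair appearing in the numerator. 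Concretely, writing $w=\tfrac{1-n+nq}{2}$ one has $2w = 1-n+nq$ and $w+\tfrac12 = 1-\tfrac n2 + \tfrac{nq}{2}$, so duplication yields $\Gamma(1-n+nq) = \tfrac{2^{-n+nq}}{\sqrt\pi}\,\Gamma(\tfrac{1-n+nq}{2})\,\Gamma(1-\tfrac n2+\tfrac{nq}{2})$; hence $\Gamma(1-\tfrac n2+nq)/\Gamma(1-\tfrac n2+\tfrac{nq}{2})$ is not quite the object to duplicate — rather one should duplicate with $2w = nq$ versus $2w' = 1-n+nq$ and track the factor $2^{nq-n}$ and the leftover $\sqrt\pi$. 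The only genuine obstacle is this bookkeeping of Gamma-function identities together with verifying that the convergence condition $\re(c-a-b)>0$ (and $\re(c)>\re(b)>0$, needed for \eqref{for} to apply at $r=1$) holds throughout the range $1<q\le\infty$ of interest; everything else is substitution. Alternatively, one can bypass Gauss summation entirely and evaluate $a_q(1)$ directly from its integral definition: at $r=1$ the integrand is $\sin^{n-2}t\,(2-2\cos t)^{nq/2-n+1} = 2^{nq/2-n+1}\sin^{n-2}t\,(1-\cos t)^{nq/2-n+1}$, and the substitution $u=(1+\cos t)/2$ reduces this to a Beta integral $\int_0^1 u^{(n-3)/2}(1-u)^{(n-3)/2+nq/2-n+1}du$, whose value in terms of Gamma functions gives the stated answer after simplification — this is the cleaner route and I would present it as the proof, checking only that the exponent $(n-3)/2+nq/2-n+1 = \tfrac{nq-n-1}{2}>-1$, i.e. $nq>n-1$, which holds since $q>1$.
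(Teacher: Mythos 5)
The paper states Lemma~\ref{lepa} with no proof at all, so there is nothing to compare against; your argument supplies the missing verification and it is essentially correct. The $s=0$ case is immediate either way. For $s=1$, your preferred route (direct evaluation of the integral as a Beta integral) is the cleaner one and does work: tracking the powers of two, one gets $(2-2\cos t)^{nq/2-n+1}=2^{nq/2-n+1}(1-\cos t)^{nq/2-n+1}$, then $1-\cos t=2(1-u)$, $\sin^{n-3}t=2^{n-3}\bigl(u(1-u)\bigr)^{(n-3)/2}$ and $dt=-2\,du/\sin t$, for a total factor $2^{(nq/2-n+1)+(nq/2-n+1)+(n-3)+1}=2^{nq-n}$ multiplying $B\bigl(\tfrac{n-1}{2},\tfrac{nq-n+1}{2}\bigr)=\Gamma(\tfrac{n-1}{2})\Gamma(\tfrac{1-n+nq}{2})/\Gamma(\tfrac{nq}{2})$, exactly the claimed value; the convergence condition $nq>n-1$ holds for all $q\ge 1$. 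In your Gauss-summation route there is one arithmetic slip: with $a=-1+n-\tfrac{nq}{2}$, $b=\tfrac12(n-nq)$, $c=\tfrac n2$ one has $c-a-b=1-n+nq$, not $1-\tfrac n2+nq$ (still positive for $q\ge1$, so Gauss's theorem applies). This slip is the source of your confusion about ``which object to duplicate'': with the correct value, $F(a,b,\tfrac n2,1)=\Gamma(\tfrac n2)\Gamma(1-n+nq)\big/\bigl(\Gamma(1-\tfrac n2+\tfrac{nq}{2})\Gamma(\tfrac{nq}{2})\bigr)$, and the duplication identity you wrote down, $\Gamma(1-n+nq)=\tfrac{2^{nq-n}}{\sqrt\pi}\Gamma(\tfrac{1-n+nq}{2})\Gamma(1-\tfrac n2+\tfrac{nq}{2})$, cancels the factor $\Gamma(1-\tfrac n2+\tfrac{nq}{2})$ and the $\sqrt\pi$ and yields the stated formula directly. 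Either route, written out with these corrections, is a complete proof.
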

\section{The proof of Theorem~\ref{seconda}} The proof of Theorem~\ref{seconda} lies on the following lemmas.
\begin{lemma}\label{cor} Let $1\le q\le 2$ and
$$a_q(r)=\int_0^\pi
\sin^{n-2} t(1+r^2-2 r \cos t)^{nq/2+n-1} dt. $$ Then
$$\max_{0\le r \le 1}a_q(r)=\left\{
                              \begin{array}{ll}
                                \frac{ \sqrt\pi
  \Gamma(\frac{n-1}{2})}{\Gamma(\frac n2)}, & \hbox{if $q<2-2/n$,} \\
                               \frac{2^{nq - n }
  \Gamma(\frac{n-1}{2}) \Gamma(\frac{1-n+nq}{2})}{ \Gamma(\frac{n q}{
  2})}, & \hbox{if $q\ge 2-2/n$}.
                              \end{array}
                          \right.$$
\end{lemma}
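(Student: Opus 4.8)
The plan is to recognize $a_q(r)$, up to a positive constant, as the restriction to the $e_1$-axis of a rotationally invariant function on $B^n$ whose Laplacian has a fixed sign, that sign changing exactly at $q=2-2/n$. Set
$$v(x)=\int_S|x-\zeta|^{\,nq-2n+2}\,d\sigma(\zeta),\qquad x\in\overline{B^n}.$$
Since $q\ge 1$ the exponent satisfies $nq-2n+2>1-n$, so this integral converges for every $x\in\overline{B^n}$ and, by dominated (or monotone) convergence, $r\mapsto v(re_1)$ is continuous on $[0,1]$. By the computation of Section~2 leading to \eqref{cp} (together with the identity $C_p^q(x)=\int_S|\eta-r_x|^{nq-2n+2}d\sigma(\eta)$ obtained there), we have $a_q(r)=\frac{\sqrt\pi\,\Gamma((n-1)/2)}{\Gamma(n/2)}\,v(re_1)$ for $0\le r\le 1$, so it suffices to determine $\max_{0\le r\le 1}v(re_1)$.

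The key point is the identity, valid for $x\ne\zeta$,
$$\Delta_x|x-\zeta|^{\,nq-2n+2}=(nq-2n+2)\,(nq-n)\,|x-\zeta|^{\,nq-2n}.$$
Since $nq-n=n(q-1)\ge 0$, the right-hand side is $\ge 0$ when $q\ge 2-2/n$ and $\le 0$ when $q\le 2-2/n$. On any ball $\{|x|\le\rho\}$ with $\rho<1$ one has $|x-\zeta|\ge 1-\rho>0$ uniformly in $\zeta\in S$, which justifies differentiation under the integral sign; hence $v$ is subharmonic on $B^n$ when $q\ge 2-2/n$ and superharmonic when $q\le 2-2/n$ (and, when $q=2-2/n$, harmonic, indeed $v\equiv v(0)=1$).

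Finally I would use the classical monotonicity of spherical means: if $v\in C^2(B^n)$ is subharmonic then $\rho\mapsto\int_S v(\rho\zeta)\,d\sigma(\zeta)$ is nondecreasing, because its $\rho$-derivative is a positive multiple of $\int_{B_\rho}\Delta v\ge 0$ by the divergence theorem, and for rotationally invariant $v$ this spherical mean equals $v(\rho e_1)$; dually for superharmonic $v$ it is nonincreasing. Hence, using continuity of $r\mapsto v(re_1)$ at $r=1$, $\max_{[0,1]}v(re_1)=v(e_1)$ when $q\ge 2-2/n$ and $\max_{[0,1]}v(re_1)=v(0)$ when $q\le 2-2/n$. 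Translating back via $a_q(r)=\frac{\sqrt\pi\,\Gamma((n-1)/2)}{\Gamma(n/2)}v(re_1)$ and inserting from Lemma~\ref{lepa} the values $a_q(0)=\frac{\sqrt\pi\,\Gamma((n-1)/2)}{\Gamma(n/2)}$ and $a_q(1)=\frac{2^{nq-n}\Gamma((n-1)/2)\Gamma((1-n+nq)/2)}{\Gamma(nq/2)}$ yields the asserted formula; at $q=2-2/n$ the two expressions coincide by the Legendre duplication formula. The points that need the most care are the uniform positive lower bound on $|x-\zeta|$ that legitimizes differentiating under the integral near $S$, and the passage $r\to 1^-$; the Laplacian identity and the monotonicity of spherical means are standard. (Alternatively one can work with the hypergeometric representation of Lemma~\ref{pole} and formula \eqref{deri}, which reduces the statement to nonnegativity of $F(n-\frac{nq}{2},\,1+\frac{n(1-q)}{2},\,1+\frac n2,\,r^2)$ on $[0,1]$; this is transparent from the power series while $q\le 1+2/n$ but needs an extra argument otherwise, which the potential-theoretic route circumvents.)
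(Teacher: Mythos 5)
Your proof is correct, but it takes a genuinely different route from the paper's. The paper works with the closed form of Lemma~\ref{pole}, differentiates the hypergeometric function via \eqref{deri}, and shows the resulting factor $F(n-\tfrac{nq}{2},\tfrac12(2+n-nq),\tfrac{n}{2}+1,r^2)$ is positive by Euler's integral representation \eqref{for} (legitimate since $\tfrac n2+1>n-\tfrac{nq}{2}>0$ for $1\le q<2$), so that the sign of $a_q'(r)$ is that of $(2+n(q-2))(q-1)$, which flips at $q=2-2/n$. You instead observe that $a_q(r)$ is a constant multiple of the radial function $v(x)=\int_S|x-\zeta|^{nq-2n+2}d\sigma(\zeta)$ and that $\Delta_x|x-\zeta|^{\alpha}=\alpha(\alpha+n-2)|x-\zeta|^{\alpha-2}$ with $\alpha=nq-2n+2$, so $v$ is superharmonic or subharmonic according as $q\le 2-2/n$ or $q\ge 2-2/n$; monotonicity of spherical means then gives the dichotomy, with the boundary value handled by Fatou/continuity. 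Your argument is more conceptual and avoids the hypergeometric machinery entirely; it explains \emph{why} $2-2/n$ is critical (it is where the kernel changes from superharmonic to subharmonic), and, since for $q\ge 2$ both factors $\alpha$ and $\alpha+n-2=n(q-1)$ are positive, it subsumes Lemma~\ref{lar} (the case $q\ge 2$), which the paper proves separately by a direct manipulation of the integral. What the paper's route buys is the explicit hypergeometric formula for $a_q(r)$, which is needed anyway for Theorem~\ref{prima}, and strict monotonicity on $(0,1)$. Two small points: the exponent in the displayed definition of $a_q$ in the lemma statement reads $nq/2+n-1$ but should be $nq/2-n+1$ (a typo that both you and the paper's own proof silently correct), and your identification $a_q(r)=\frac{\sqrt\pi\,\Gamma((n-1)/2)}{\Gamma(n/2)}v(re_1)$ is exactly the content of \eqref{cp}, so no new computation is hidden there.
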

\begin{proof}[Proof of Lemma~\ref{cor}]
By using a well-known formula \eqref{deri} for the derivative of
Gauss hypergeometric function we obtain
\[\begin{split}g(r):&=\frac{d}{dr}\left(F(-1 + n - \frac{n q}{2}, \frac 12 (n
- n q),
  \frac n2, r^2)\right)\\&= r\cdot(2 + n (-2 + q)) (-1 + q) F(n - \frac{n q}{2},
  \frac 12 (2 + n - n q), \frac{2 + n}{2}, r^2).\end{split}\] Let
$$h(r)=F(n - \frac{n q}{2},
  \frac 12 (2 + n - n q), \frac{2 + n}{2}, r^2).$$ Put $b=n - \frac{n
q}{2}$. As $q<2$, then $b>0$. Further $$h(r)={F}( b, b+1 - n/2,
n/2+1, r^2).$$ Because $n/2+1>b> 0$, from \eqref{for} we obtain
\[\begin{split}h(r)&={F}( b+1 - n/2,b, n/2+1,
r^2)\\&=\frac{\Gamma(n/2+1)}{\Gamma(b)\Gamma(n/2+1-b)}\int_0^1\frac{t^{b-1}(1-t)^{n/2-b}}{(1-tr^2)^{
b+1 - n/2}}dt.\end{split}\] Hence \begin{equation}\label{pomo}
g(r)>0, \text{ for  all } 1< q <2 \text{ and } 0<r <1.
\end{equation}
On the other hand if $q<2-2/n$ then $$r\cdot(2 + n (-2 + q)) (-1 +
q)>0$$ and therefore $g(r)<0$. If $q>2-2/n$, then
$$r\cdot(2 + n (-2 + q)) (-1 + q)<0$$ and therefore $g(r)>0$. From \eqref{pomo} we obtain $$\max_{0\le
r \le 1}a_q(r)=\left\{
                              \begin{array}{ll}
                                a_q(0), & \hbox{if $q<2-2/n$;} \\
                                a_q(1), & \hbox{if $q\ge 2-2/n$}.
                              \end{array}
                            \right.$$ From Lemma~\ref{lepa} we
obtain the conclusion of the lemma.
\end{proof}
\begin{lemma}\label{lar} For $n\geq 2$ and $q\geq2$ integrals
$$\int_0^{\pi}\sin^{n-2}t(1+r^2-2r\cos t)^{nq/2-n+1}d t$$
are monotone increasing with respect to the parameter $r$, $0\leq
r\leq 1$.
\end{lemma}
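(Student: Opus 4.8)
The plan is to exploit the fact that for $q\ge 2$ the exponent
$$\alpha:=\frac{nq}{2}-n+1$$
satisfies $\alpha\ge 1$, which turns the integrand into a \emph{convex} function of $r$. First I would record that for every fixed $t\in[0,\pi]$ the quantity
$$1+r^2-2r\cos t=(r-\cos t)^2+\sin^2 t$$
is a convex quadratic in $r$ taking nonnegative values, while $s\mapsto s^{\alpha}$ is convex and nondecreasing on $[0,\infty)$ because $\alpha\ge 1$. Hence $r\mapsto(1+r^2-2r\cos t)^{\alpha}$ is convex on $\R$, being a composition of a convex nondecreasing function with a convex one. (Equivalently, its second derivative
$$2\alpha\,(1+r^2-2r\cos t)^{\alpha-2}\bigl[\,2(\alpha-1)(r-\cos t)^2+(1+r^2-2r\cos t)\,\bigr]$$
is nonnegative.)

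The only point that needs care is that $1+r^2-2r\cos t$ is symmetric about $r=\cos t$, which is not the origin, so convexity of the integrand by itself does not yet give monotonicity in $r$. I would remove this asymmetry by the substitution $t\mapsto\pi-t$ (under which $\cos t\mapsto-\cos t$ while $\sin^{n-2}t$ is unchanged), which yields
$$a_q(r)=\frac12\int_0^\pi\sin^{n-2}t\,\bigl[(1+r^2-2r\cos t)^{\alpha}+(1+r^2+2r\cos t)^{\alpha}\bigr]\,dt .$$
For each fixed $t$ the bracketed function is now \emph{even} in $r$ and convex (a sum of two convex functions), and an even convex function on $\R$ is nondecreasing on $[0,\infty)$. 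Integrating against the nonnegative weight $\tfrac12\sin^{n-2}t\,dt$ preserves this, so $a_q$ is nondecreasing on $[0,\infty)$, in particular on $[0,1]$, which is exactly the assertion of the lemma.

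If a computational route is preferred, one may differentiate under the integral sign,
$$a_q'(r)=2\alpha\int_0^\pi\sin^{n-2}t\,(r-\cos t)(1+r^2-2r\cos t)^{\alpha-1}\,dt ,$$
and apply the same $t\mapsto\pi-t$ pairing, reducing the claim $a_q'(r)\ge 0$ to the pointwise inequality
$$(r-\cos t)(1+r^2-2r\cos t)^{\alpha-1}+(r+\cos t)(1+r^2+2r\cos t)^{\alpha-1}\ge 0$$
for $0\le r\le 1$, $t\in[0,\pi]$, which follows from $1+r^2+2r\lvert\cos t\rvert\ge 1+r^2-2r\lvert\cos t\rvert>0$ together with $\alpha-1\ge 0$. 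The main obstacle, such as it is, is precisely this asymmetry of the kernel; the hypothesis $q\ge 2$ (that is, $\alpha\ge 1$) is essential, since for $q<2$ one has $\alpha<1$ and $a_q$ need not be monotone, as the interior maximum in Lemma~\ref{cor} shows.
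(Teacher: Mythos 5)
Your proof is correct, and both of your routes work. The ``computational'' route is in substance the paper's own argument: the authors also differentiate under the integral sign and use the pairing $t\mapsto\pi-t$; the only cosmetic difference is that they first split $r-\cos t$ into $r$ and $-\cos t$, observe that the $r$-term contributes a nonnegative integral, and then apply the symmetrization only to the $\cos t$-term, whereas you pair the full integrand at once. Your primary route --- writing $a_q(r)$ as an integral of the even functions $(1+r^2-2r\cos t)^{\alpha}+(1+r^2+2r\cos t)^{\alpha}$, each convex in $r$ because $s\mapsto s^{\alpha}$ is convex and nondecreasing for $\alpha\ge 1$, and invoking that even convex functions are nondecreasing on $[0,\infty)$ --- is a genuinely cleaner packaging: it avoids differentiating under the integral sign altogether and isolates exactly where the hypothesis $q\ge2$ (i.e.\ $\alpha\ge1$) enters, which you correctly contrast with the interior critical point that appears for $q<2$ in Lemma~\ref{cor}. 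The only nitpick is in the last display: at $t=0$, $r=1$ the base $1+r^2-2r\lvert\cos t\rvert$ vanishes rather than being strictly positive, and the pointwise inequality also uses $r+\lvert\cos t\rvert\ge\bigl\lvert r-\lvert\cos t\rvert\bigr\rvert$ alongside the ordering of the two bases; both points are trivial but worth a clause.
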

\begin{proof} Let $$a_q(r)=\int_0^{\pi}\sin^{n-2}t(1+r^2-2r\cos
t)^{nq/2-n+1}d t.$$ For $0<r<1$ we have
\[\begin{split}a'(r)&=(nq-2n+2)\int_{0}^{\pi}\sin^{n-2} t(1+r^2-2r\cos
t)^{nq/2-n}(r-\cos t)d t\\& =(nq-2n+2)r\int_{0}^{\pi}\sin^{n-2}t
(1+r^2-2r\cos t)^{nq/2-q}d t\\& -(nq-2n+2)\int_{0}^{\pi}\sin^{n-2}
t(1+r^2-2r\cos t)^{nq/2-n}\cos t d t\geq 0 \end{split}\] because
\begin{equation}\label{bi}\int_{0}^{\pi}\sin^{n-2} t(1+r^2-2r\cos t)^{nq/2-n}d t \geq
0\end{equation} and
\begin{equation}\label{ib}\int_{0}^{\pi}\sin^{n-2}t \cos
t(1+r^2-2r\cos t)^{nq/2-n}d t\leq 0.\end{equation} The first
relation \eqref{bi} follows easily. The integral in \eqref{ib} can
be transformed:
\[\begin{split}&\int_{0}^{\pi}\sin^{n-2}t \cos t(1+r^2-2r\cos t)^{nq/2-n}d t
\\&=-\int_{-\pi/2}^{\pi/2}\cos^{n-2} t\sin t(1+r^2+2r\sin
t)^{nq/2-n}d t\\&=\int_{0}^{\pi/2}\cos^{n-2} t\sin t((1+r^2-2r\sin
t)^{nq/2-n}-(1+r^2+2r\sin t)^{nq/2-n})d t.\end{split}\] The
sub-integral expression is non-positive and consequently the
integral is also non-positive. Since $a_q(r)$ is monotone increasing
on the interval $(0,1)$ and continuous on the segment $[0,1]$ we
have conclusion.
\end{proof}
\begin{proof}[Proof of Theorem~\ref{seconda}]
By using Lemma~\ref{cor}, Lemma~\ref{lar} and \eqref{cp}, we have
$$C_p^q =1$$ if $q\leq2-2/n$ and
$$C_p^q= \frac{2^{nq-n}}{\sqrt\pi} \frac{\Gamma(\frac{n}{2}) \Gamma(\frac{1-n+nq}{2})}{ \Gamma(\frac{n q}{
  2})}$$  if $q> 2-2/n$.
\end{proof}
\begin{remark} Note that in the case $n=3$ we can
find very explicit sharp point estimate. Using classical
Newton-Leibnitz theorem we have for $r\neq0$
\[\begin{split}\int_0^{\pi}\sin t(1+r^2-2r\cos t)^{3q/2-2}d t&
=\frac{1}{2r(3q/2-1)}\int_0^{\pi}d_t(1+r^2-2r\cos t)^{3q/2-1}
\\&=\frac{1}{(3q-2)r}((1+r)^{3q-2}-(1-r)^{3q-2})\end{split}\]
So for $x\in B^3, x\neq0$, since $\frac{\Gamma(3/2)}{\sqrt\pi}=1/2$
$$|u(x)|\leq\frac{1}{(1-|x|^2)^{2/p}}\left(\frac{((1+|x|)^{3q-2}-(1-|x|)^{3q-2})}{2(3q-2)|x|}\right)^{1/q}\|u\|_{h^p(B^3)}.$$
For $x=0$ we have $|u(0)|\le\|u\|_{h^p(B^n)}$, $n\ge 2$.
\end{remark}


\begin{thebibliography}{99}
\bibitem{al} \textsc{L. Ahlfors:} {\it M\"obius transformations in several
dimensions.} Ordway Professorship Lectures in Mathematics.
University of Minnesota, School of Mathematics, Minneapolis, Minn.,
1981. ii+150 pp.
\bibitem{ABR}
\textsc{S. Axler, P. Bourdon and W. Ramey:} {\it Harmonic function
theory}, Springer Verlag New York 1992.
  \bibitem{cor}
\textsc{F. Colonna:} {\it The Bloch constant of bounded harmonic
mappings.} Indiana Univ. Math. J. \textbf{38} (1989), no. 4,
829--840.

%\bibitem{gar} \textsc{J. Garnett:}  {\it Bounded analytic
%functions.} Pure and Applied Mathematics, 96. Academic Press, Inc.,
%New York-London, 1981. xvi+467 pp.
\bibitem{kha}
\textsc{D. Khavinson:} {\it An extremal problem for harmonic
functions in the ball}, Canad. Math. Bull., \textbf{35} (1992),
218-220.

\bibitem{km}
\textsc{D. Kalaj, M. Markovic:} {\it Optimal estimates for the
gradient of harmonic functions in the unit disk} arXiv:1012.3153.

\bibitem{kavu}
\textsc{D. Kalaj, M. Vuorinen:} {\it On harmonic functions and the
Schwarz lemma,} to appear in Proceedings of the AMS.
%\bibitem{ca}
%D. Kalaj: {\it Cauchy transform and Poisson's equation.}
%arXiv:1003.3822.
%\bibitem{anali} \textsc{D. Kalaj:} {\it On isoperimetric inequality for the polydisk,} to appear in Annali di matematica pura ed applicata, DOI:10.1007/s10231-010-0153-2.
%\bibitem{Mar}
%textsc{G. Kresin, V. Maz'ya:}  {\it Optimal estimates for the
%gradient of harmonic functions in the multidimensional half-space.}
%Discrete Contin. Dyn. Syst. \textbf{28} (2010), no. 2, 425--440.

\bibitem{km1}
\textsc{G. Kresin, V. Maz'ya:} {\it Sharp pointwise estimates for
directional derivatives of harmonic functions in a multidimensional
ball.} Journal of Mathematical Sciences, \textbf{169}, No. 2, 2010.

\bibitem{km2}
\textsc{A. J. Macintyre and W. W. Rogosinski:} {\it Extremum
problems in the theory of analytic functions}, Acta Math.
\textbf{82}, 1950, 275 - 325.

\bibitem{lib}
\textsc{M. Pavlovi\'c:} {\it Introduction to function spaces on the
disk.} 20. Matemati\v cki Institut SANU, Belgrade, 2004. vi+184 pp.
\end{thebibliography}
\end{document}